\documentclass[12pt]{article}
\usepackage[intlimits]{amsmath}
\usepackage{amsmath,amsxtra,amssymb,latexsym, amscd,amsthm,pb-diagram,fancyhdr}
\usepackage[mathscr]{eucal}

\usepackage{indentfirst} 
\def\leq{\leqslant}
\def\geq{\geqslant}

\usepackage{amssymb}
\usepackage{array}
\usepackage{hhline}
\usepackage{longtable}
\usepackage{tabularx}

\usepackage[pctex32]{graphics}
\usepackage{graphicx}
\usepackage{epstopdf}
\epstopdfsetup{outdir=./}
\usepackage{amsmath}
\usepackage[mathscr]{eucal}
\usepackage{amsfonts}
\usepackage{amsthm}
\usepackage{longtable}

\newtheorem{thm}{Theorem}[section]
\newtheorem*{thm*}{Theorem}
\newtheorem{cor}[thm]{Corollary}
\newtheorem{lem}[thm]{Lemma}

\theoremstyle{definition}

\theoremstyle{remark}


\def\G{{\mathscr G}}

\def\M{{\mathcal M}}


\def\Lb{{\mathbb L}}

\def\Nb{{\mathbb N}}

\def\Pb{{\mathbb P}}
\def\Qb{{\mathbb Q}}
\def\Rb{{\mathbb R}}

\def\A{{\mathcal A}}
\def\B {{\mathcal B}} 
\def\C {{\mathcal C}}

\def\E {{\mathcal E}}
\def\F {{\mathcal F}}
\def\G {{\mathcal G}}

\def\L {{\mathcal L}}
\def\M {{\mathcal M}}

\def\P {{\mathcal P}}


\def\be{\beta}

\def\Om{\Omega}

\def\ep {\varepsilon}
\def\phi{\varphi}
\def\si{\sigma}


\def\bar{\overline}

\def\Bo{\Box}
\def\tri{\triangle}


\textheight 23cm 
\textwidth 16.5cm
\setlength{\oddsidemargin}{0 in}
\setlength{\topmargin}{-0.5 in}
\parskip 1pt
\parindent .5cm
\renewcommand{\proofname}{Proof}

\def\bar{\overline}

\def\leq{\leqslant}
\def\geq{\geqslant}

\usepackage{ifpdf}
\usepackage{grffile}
\usepackage{ifpdf}
\ifpdf
  \usepackage{epstopdf}
\fi
\usepackage{wrapfig}
\begin{document}

\title{The existence of a measure-preserving bijection from a unit square to a unit segment }         
\author {Cong-Dan Pham\\Duy Tan University\\congdan.pham@gmail.com}
\maketitle
\begin{abstract}
In this paper, we prove the existence of a measure-preserving bijection from unit square to unit segment. This bijection is also called the probability isomorphism between two probability spaces. Then we give a new proof of the existence of the independent random variables on Borel probability space $([0,1],\B([0,1]),\Lb)$ that their distribution functions are the given distribution functions.
\end{abstract}
\section{Introduction}
In classical analysis theory, there are strange functions from a unit segment to a unit square. A bijection is called a function of type Cantor and a continuous surjection is called a function of type Peano (see \cite{Kha05}). In this paper we study the question of the existence of a bijection between two the Borel probability spaces $([0,1],\B([0,1]),\Lb)$ and $([0,1]^2,\B([0,1]^2),\Lb)$ (maybe difference a set with null measure), where $\Lb$ is the Lebesgue measure. The difference a set with null measure means that there exist two set $B\subset [0,1]^2, K\subset [0,1]$ with $\Lb(B)=\Lb(K)=1$ and the bijection maps from $B$ to $K$. This bijection satisfies that it and it's reverse function are measure-preserving functions. We also call this type bijection being the bi-measure-preserving function or probability isomorphism. 
Our main result reads then as follows:
\begin{thm}\label{MTH}
There exists a probability isomorphism $f$ between two Borel probability spaces the unit square and the unit segment.
\end{thm}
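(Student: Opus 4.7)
The plan is to construct $f$ explicitly using binary expansions and the classical \emph{bit-interleaving} identification between an infinite binary sequence and a pair of such sequences. For $x \in [0,1]$ with binary expansion $x = \sum_{k\ge 1} x_k 2^{-k}$, $x_k \in \{0,1\}$, I will put
\[
f(x) = \Bigl(\sum_{k\ge 1} x_{2k-1}2^{-k},\ \sum_{k\ge 1} x_{2k}2^{-k}\Bigr),
\]
and define $f^{-1}$ by interleaving the bits of the two coordinates. The classical difficulty with this construction — binary expansions are not unique at the dyadic rationals — will be resolved by excising a null set on each side.

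First I would isolate the bad sets. Let $D\subset[0,1]$ be the countable set of dyadic rationals and set
\[
N_1 = D \cup \{x\in[0,1]\setminus D : (x_{2k-1})_k\text{ or }(x_{2k})_k\text{ is eventually constant}\},
\]
together with $N_2 = (D\times[0,1])\cup([0,1]\times D)$. Both are countable unions of Lebesgue-null sets. On $K = [0,1]\setminus N_1$ the binary expansion is uniquely determined and neither the odd nor the even subsequence of bits is eventually constant, so $f(K)\subset [0,1]^2\setminus N_2 =: G$. Conversely, for $(u,v)\in G$ with unique expansions $(u_k),(v_k)$ — neither eventually constant — the interleaved sequence $u_1 v_1 u_2 v_2\ldots$ is itself not eventually constant and has non-eventually-constant odd and even subsequences, so its decoding lies in $K$. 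This yields a genuine bijection $f\colon K\to G$ with $\Lb(K) = \Lb(G) = 1$.

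Measurability of $f$ and $f^{-1}$ follows from the fact that the $k$-th binary digit $x_k = \lfloor 2^k x\rfloor\bmod 2$ is a Borel function of $x$. For the measure-preservation property I would check $\Lb(f^{-1}(R)) = \Lb(R)$ on the $\pi$-system of half-open dyadic rectangles
\[
R = [j2^{-m},(j+1)2^{-m})\times[\ell 2^{-n},(\ell+1)2^{-n}),
\]
which generates $\B([0,1]^2)$. For such an $R$, the intersection $f^{-1}(R)\cap K$ consists of those $x\in K$ whose bits $x_1,x_3,\ldots,x_{2m-1}$ are prescribed by the $m$-bit binary expansion of $j$ and whose bits $x_2,x_4,\ldots,x_{2n}$ are prescribed by $\ell$; this is a cylinder of Lebesgue measure $2^{-(m+n)}=\Lb(R)$. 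The symmetric computation on dyadic subintervals of $[0,1]$ shows that $f^{-1}$ is also measure-preserving, so Dynkin's $\pi$-$\lambda$ theorem gives the full measure-preservation and yields a probability isomorphism in the sense of the statement.

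The principal technical nuisance is the bookkeeping of null sets needed to turn the formal interleaving into an unambiguous bijection between two full-measure Borel subsets; once $N_1$ and $N_2$ are correctly chosen so that $f$ and $f^{-1}$ avoid representation ambiguities, the measure-preservation step collapses to a one-line calculation on dyadic cylinders.
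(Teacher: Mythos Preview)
Your argument is correct: the bit-interleaving map, once the null sets $N_1$ and $N_2$ are excised, is a Borel bijection between full-measure subsets, and the measure-preservation check on half-open dyadic rectangles goes through exactly as you describe, so the $\pi$--$\lambda$ theorem finishes the job.

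Your route, however, is genuinely different from the paper's. The paper does \emph{not} write down an explicit formula; instead it subdivides the square and the segment into $4^n$ labelled pieces, chooses at each stage an arbitrary bijection $f_n$ respecting the labels, shows that $(f_n)$ is uniformly Cauchy, and then proves---by a case analysis on adjacent quaternary addresses---that the limit is injective off the dyadic grid and hits all of $[0,1]$ minus a countable set. Measure-preservation is then deduced from a separate preparatory lemma (a ``probability isomorphism theorem'') that plays the role of your $\pi$--$\lambda$ step. The null-set bookkeeping is also different: the paper removes only \emph{countable} sets (the dyadic grid in the square, the $4$-adic rationals in the segment) and then patches the remaining countable defect by hand, whereas you remove the larger, uncountable but still null, set $N_1$ so that no patching is needed. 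Your approach is shorter and more classical; the paper's is more geometric, in the spirit of space-filling-curve constructions, and isolates a reusable criterion for when a bijection between generating $\pi$-systems extends to a probability isomorphism.
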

This result is applied to give a new proof about the existence of the independent random variables on Borel probability space $([0,1],\B([0,1]),\Lb)$ that their distribution functions are given distribution functions.
In classical probability theory, we know that, for a finite number of the given distribution functions, there exists a family of independent random variables on $([0,1],\B([0,1]),\Lb)$ such that theirs distributions are the given distributions. This result is proved by using Rademacher functions (see for instance \cite{No},\cite{bil08}).

\section{Proof of Theorem 1.1}
To prove Theorem, we need somes lemmas as follows:
\begin{lem}\label{l1}
Let $\phi:\Om\to(E,\M)$ where $\Om, E$ are two space, $\M$ is a $\si-$algebra generated by a collection of sets $\C$. Set $f^{-1}(\M):=\{f^{-1}(B): B\in\M\}$. Then $f^{-1}(\M)$ is a $\si-$algebra generated by $f^{-1}(\C).$
\end{lem}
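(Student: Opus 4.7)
The plan is to split the proof into two parts: first show that $f^{-1}(\M)$ is a $\si$-algebra on $\Om$, and then establish the two inclusions $\sigma(f^{-1}(\C)) \subset f^{-1}(\M)$ and $f^{-1}(\M) \subset \sigma(f^{-1}(\C))$.

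First I would verify that $f^{-1}(\M)$ is a $\si$-algebra. This is routine and relies on the fact that the preimage operation commutes with all set-theoretic operations: $f^{-1}(E) = \Om$, $f^{-1}(B^c) = (f^{-1}(B))^c$, and $f^{-1}(\bigcup_n B_n) = \bigcup_n f^{-1}(B_n)$. Hence closure under complementation and countable unions in $\M$ translates directly into the same closure properties for $f^{-1}(\M)$.

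Next, the inclusion $\sigma(f^{-1}(\C)) \subset f^{-1}(\M)$ is immediate: since $\C \subset \M$ we have $f^{-1}(\C) \subset f^{-1}(\M)$, and because $f^{-1}(\M)$ is a $\si$-algebra (just established), it contains the $\si$-algebra generated by $f^{-1}(\C)$.

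The main obstacle — although still standard — is the reverse inclusion $f^{-1}(\M)\subset \sigma(f^{-1}(\C))$, since one cannot simply push $\si$ through $f^{-1}$ by brute force. The trick I would use is the \emph{good-sets principle}: define
\[
\D := \{B \in \M : f^{-1}(B) \in \sigma(f^{-1}(\C))\}.
\]
I would then check that $\D$ is a $\si$-algebra on $E$, again using that $f^{-1}$ commutes with complements and countable unions, and observe that $\C \subset \D$ by construction. Therefore $\M = \sigma(\C) \subset \D \subset \M$, which gives $\D = \M$, meaning $f^{-1}(B) \in \sigma(f^{-1}(\C))$ for every $B \in \M$. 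Combining the two inclusions yields $f^{-1}(\M) = \sigma(f^{-1}(\C))$, completing the proof.
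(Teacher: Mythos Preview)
Your argument is correct. The verification that $f^{-1}(\M)$ is a $\si$-algebra is routine, the inclusion $\sigma(f^{-1}(\C))\subset f^{-1}(\M)$ follows immediately, and the good-sets argument with $\D=\{B\in\M: f^{-1}(B)\in\sigma(f^{-1}(\C))\}$ is the standard and cleanest way to get the reverse inclusion; all the checks you list go through exactly as stated.

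As for comparison with the paper: there is nothing to compare. The paper does not actually prove this lemma; it simply remarks that the result is established ``somewhere in measure theory'' and moves on. Your write-up therefore supplies precisely the standard proof the paper omits, and nothing in your approach conflicts with how the lemma is subsequently used (namely, in the proof of Theorem~\ref{MB}, where only the conclusion $f^{-1}(\A)=\sigma(f^{-1}(\M))$ is invoked).
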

One proved this lemma in somewhere in measure theory.
Now, we present an important theorem to apply in the next part:
\begin{thm}[probability isomorphism]\label{MB}
Let two probability spaces $(\Om,\F,\Pb), (E,\A,\Qb)$ and a bijection $\phi:\Om\to\E$ such that:

 $\F,\A$ are respectively generated by two collection of sets $\E,\M$ i.e. $\F=\si(\E),\A=\si(\M)$ and 
$f(\E)=\{f(A),\forall A\in\E\}\subset \A$; $f^{-1}(\M)=\{f^{-1}(B),\forall B\in\M\}\subset\F$ and $\Pb(f^{-1}(B))=\Qb(B)\forall B\in\M$. 

Then $f,f^{-1}$ are measurable ($f$ is bi-measurable) and if $\M$ is closed by intersection so $f,f^{-1}$ are measure-preserving ($f$ is called bi-measure-preserving or probability isomorphism.)
\end{thm}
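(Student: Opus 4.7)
The plan is to handle bi-measurability first via Lemma \ref{l1} in both directions, and then to upgrade the measure-preserving identity from the generating family $\M$ to all of $\A$ by a Dynkin-system (monotone-class) argument, leveraging the assumption that $\M$ is closed under intersection.

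For measurability of $f$, I apply Lemma \ref{l1} to $f:\Om\to(E,\A)$: since $\A=\si(\M)$, the lemma yields $f^{-1}(\A)=\si(f^{-1}(\M))$, and because $f^{-1}(\M)\subset\F$ by hypothesis while $\F$ is already a $\si$-algebra, one concludes $f^{-1}(\A)\subset\F$. For measurability of $f^{-1}:E\to\Om$, I apply the same lemma to the bijection $g:=f^{-1}$, noting that $g^{-1}(B)=f(B)$ for $B\subset\Om$: since $\F=\si(\E)$ and $f(\E)\subset\A$ by hypothesis, the lemma gives $g^{-1}(\F)=\si(f(\E))\subset\A$, which is exactly measurability of $f^{-1}$.

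For the measure-preserving property, I introduce the class
\[
\D := \{A\in\A : \Pb(f^{-1}(A))=\Qb(A)\},
\]
which contains $\M$ by hypothesis. A direct check shows that $\D$ is a Dynkin ($\lambda$-) system: it contains $E$ (both sides equal $1$), it is closed under complementation (using $f^{-1}(A^c)=f^{-1}(A)^c$ together with the fact that $\Pb$ and $\Qb$ are probability measures), and it is closed under countable disjoint unions (via countable additivity and the fact that $f^{-1}$ preserves disjoint unions). Since $\M$ is closed under intersection, it is a $\pi$-system, and Dynkin's $\pi$-$\lambda$ theorem yields $\D\supset\si(\M)=\A$; thus $f$ is measure-preserving. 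The corresponding property for $f^{-1}$ is then automatic from the bijection: for any $B\in\F$, the already-established measurability of $f^{-1}$ gives $f(B)\in\A$, and
\[
\Qb(f(B))=\Pb(f^{-1}(f(B)))=\Pb(B).
\]

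There is no substantive obstacle here; the argument is the standard $\pi$-$\lambda$ extension of a measure identity from a generating $\pi$-system, transported across a bijection. The two separate inclusion hypotheses $f(\E)\subset\A$ and $f^{-1}(\M)\subset\F$ serve only to guarantee bi-measurability in both directions, so that the Dynkin argument can be run inside the larger $\si$-algebras $\F$ and $\A$; the intersection-closedness of $\M$ is what allows the passage from the generating family to all of $\A$.
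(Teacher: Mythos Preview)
Your proof is correct and follows essentially the same route as the paper: Lemma \ref{l1} in both directions for bi-measurability, then the Dynkin/$\pi$--$\lambda$ argument applied to the class $\{A\in\A:\Pb(f^{-1}(A))=\Qb(A)\}$. The only differences are cosmetic---the paper verifies closure under proper differences rather than complements, and you add the explicit one-line deduction that $f^{-1}$ is measure-preserving, which the paper states but does not spell out.
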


\begin{proof}
By Lemma \ref{l1} then $f^{-1}(\A)$ is $\si-$algebra generated by $f^{-1}(\M)$, moreover $f^{-1}(\M)\subset \F$, this implies $f^{-1}(\A)\subset \F.$ It is similar to have that $f(\E)\subset \A.$ Therefore
$f,f^{-1}$ are measurable and there exist a bijection $\phi:\F\to\A$ such that $\phi(A):=f(A).$ 
Now, consider $\L=\{B\in\A:\Pb(f^{-1}(B))=\Qb(B)\}$. By assumption, $\M\subset \L.$ We prove that $\L$ is a $\si-$additive class.
In fact, $f^{-1}(\emptyset)=\emptyset$ and $f^{-1}(E)=\Om$ then $\emptyset,E\in\L.$
Consider $A\subset B$ and $A,B\in\L$, we have $\Pb(f^{-1}(B\setminus A))=\Pb[f^{-1}(B)\setminus f^{-1}(A)]=\Pb(f^{-1}(B))-\Pb(f^{-1}(A))=\Qb(B)-\Qb(A)$. Therefore $B\setminus A\in\L.$
It remains to prove the $\si-$additivity. Let $\{B_n\}_{n\geq 1}$ are pairwise disjoint. So we get 
\begin{align*}
\Pb[f^{-1}(\cup_{n\geq 1} B_n)]=\Pb[\cup_{n\geq 1}f^{-1}(B_n)]=\sum_{n\geq 1}\Pb[f^{-1}( B_n)]
=\sum_{n\geq 1}\Qb( B_n)=\Qb(\cup_{n\geq 1} B_n)
\end{align*}
This equation implies that $\cup_{n\geq 1} B_n\in\L.$ Therefore, $\L$ is $\si-$additive class concluding $\M.$ On the other hand, $\M$ is closed by intersection so $\si-$algebra $\A$ generated by $\M$ is also $\si-$additive class generated by $\M.$ Since, $A\subset \L.$ By the definition of $\L$, $\L\subset\A$ then $\L=\A$. It means that $\Pb(f^{-1}(B))=\Qb(B)\forall B\in\A$ and $f,f^{-1}$ are measure-preserving. 
\end{proof}

Now, we return to the proof of Theorem \ref{MTH}. Denote $\Bo=[0,1]^2, \tri=[0,1].$ We will show that there exist a Borel set $B$ on $\Bo$ such that $\Lb(B)=1$ and a bijection $f:B\to\tri$ such that $f,f^{-1}$ are Borel measurable, measure-preserving. We know that any Borel set of $\Rb^2,\Rb$ has a positive measure then it has continumn cardinality. We will construct the bijection $f$ and the set $B$ as follows:
\begin{wrapfigure}{R}[0pt]{0.3\textwidth}
 \vspace{-20pt}
  \begin{center}
  \includegraphics[width=0.3\textwidth]{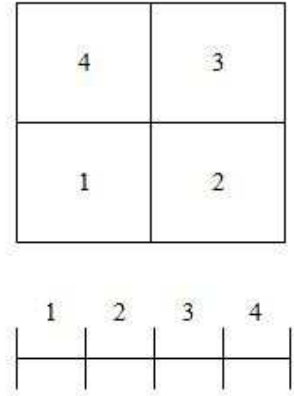}
   \vspace{-20pt}
    \caption{Construction of bijections}
    \end{center}
  \vspace{-20pt}
\end{wrapfigure}

Step 1: The square is divided into $4$ smaller squares that are numerated as figure above. The unit segment is divided uniformly into $4$ segments and they also numerated.

Next step we redivide uniformly every smaller squares and small segments into $4$ parts. We continue this process until infinite steps and numerate the squares and segments by $1,2,3,4$ at every step.
Consider $4$ apexes of the square by coordinates: $(0,0);(0,1);(1,0);(1,1).$ Set $M:=\{(x,y)\in\Bo: x \text{ or }y=\frac{a}{2^n}, 0\leq a\leq 2^n,a\in\Nb\}$ and $\Bo^*=\Bo\setminus M.$ $M$ is a Borel set and has the Lebesgue measure $\Lb(M)=0.$
Similarly, we put a Descartes system to have two endpoints of the unit segment $(0,0);(1,0).$ Set $C=\{x\in\tri: x=\frac{a}{4^n},a\in\Nb,n\in\Nb^*\}.$ $C$ is denumerable and $\Lb(C)=0.$ Denote $\Bo_n^{j_1j_2...j_n},\tri_n^{j_1j_2...j_n}$ be respectively the closed square, close segment obtained at the $n-th$ step and $j_k$ is order number of the square, segment at $k-th$ step contains it ($j_k\in\bar{1,4}$). Set ${}^*\Bo_n^{j_1j_2...j_n}:=\Bo_n^{j_1j_2...j_n}\setminus M,{}^*\tri_n^{j_1j_2...j_n}:=\tri_n^{j_1j_2...j_n}\setminus C.$

We construct a sequence of bijections $\{f_n\}_{n\geq 1}$, $f_n:\Bo^*\to\tri^*$, satisfying $f({}^*\Bo_n^{j_1j_2...j_n})={}^*\tri_n^{j_1j_2...j_n}$, we can chose a bijection like that because two sets have the same continumn cardinality. With this construction, we see that for all $m>n$:
$|f_m(x)-f_n(x)|<\frac{1}{4^n}$. Indeed, let $x\in{}^*\Bo_n^{j_1j_2...j_n...j_m}\subset{}^*\Bo_n^{j_1j_2...j_n}$ so $f_n(x)\in{}^*\tri_n^{j_1j_2...j_n}$ and $f_m(x)\in{}^*\tri_n^{j_1j_2...j_n...j_m}\subset{}^*\tri_n^{j_1j_2...j_n}$. Therefore $|f_m(x)-f_n(x)|<|{}^*\tri_n^{j_1j_2...j_n}|=\frac{1}{4^n}.$ This implies that $\{f_n\}$ converges uniformly to $f$ and $f:\Bo^*\to\tri.$ Let us prove that $f$ is injective and $\tri^*\subset f(\Bo^*).$ We prove by contradiction. Suppose that there exist $x\ne y$ such that $f(x)=f(y).$ Because $x\ne y$ then there exist $N$ such that $x\in{}^*\Bo_n^{j_1j_2...j_{N-1}j_N};y\in{}^*\Bo_n^{j_1j_2...j_{N-1}j'_N}$ with $j_N\ne j'_N.$ If $|j_N-j'_N|>1$ then $\tri_n^{j_1j_2...j_N};\tri_n^{j_1j_2...j'_N}$ are two disjoint segments for all $n\geq N$. Moreover, $f_n(x)\in\tri_n^{j_1j_2...j_N}, f_n(y)\in\tri_n^{j_1j_2...j'_N}$, take $n$ tend to infinity we get $f(x)\in\tri_n^{j_1j_2...j_N}, f(y)\in\tri_n^{j_1j_2...j'_N}$, it implies $f(x)\ne f(y).$ This is contradictory with the assumption $f(x)=f(y)$ so $|j_N-j'_N|=1.$ Hence we have three cases $(j_N,j'_N)\in\{(1,2);(2,3);(3,4)\}$ (suppose $j_N<j'_N$). Since $\lim f_n(x)=\lim f_n(y)=f(x)=f(y)$; $f_n(x),f_n(y)$ always belong to two segments side by side at the step $n$ and  $f_n(x)$ always belong to segments numerated by $4$ and $f_n(y)$ always belong to segments numerated by $1$. It means that $f_n(x)\in\tri_n^{j_1j_2...j_N44...4};f_n(y)\in\tri_n^{j_1j_2...j'_N11...1}.$ By the definition of $f_n$, it implies
$x\in{}^*\Bo_n^{j_1j_2...j_N44...4};y\in{}^*\Bo_n^{j_1j_2...j_N11...1}.$ Hence $x\in\bigcap_{n\geq N}\Bo_n^{j_1j_2...j_N44...4};y\in\bigcap_{n\geq N}\Bo_n^{j_1j_2...j_N11...1}.$ 

\begin{wrapfigure}{R}[0pt]{0.3\textwidth}
 \vspace{-20pt}
  \begin{center}
  \includegraphics[width=0.3\textwidth]{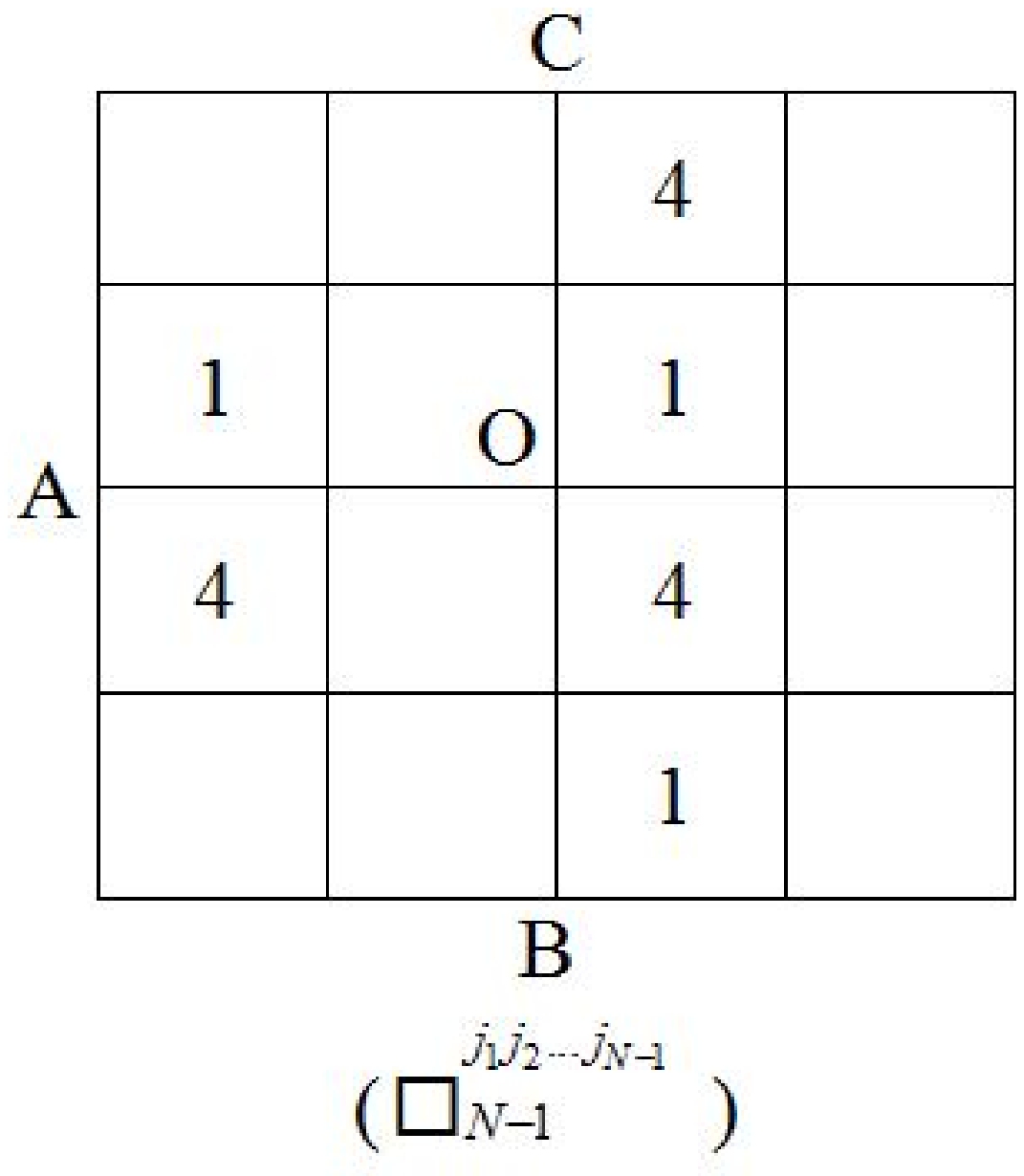}
   \vspace{-20pt}
    
    \end{center}
  \vspace{-30pt}
\end{wrapfigure} 

Consider three case:
\begin{align*}
 j_N=1,j'_N=2 \text{ then } x=A,y=B\\
 j_N=2,j'_N=3 \text{ then } x=O,y=O\\
 j_N=3,j'_N=4 \text{ then } x=C,y=A
\end{align*}
All three cases are contradictory because $x\be y$ and $x,y\notin M$. Therefore, the assumption of the proof by contradiction is wrong then $f$ is injective.
Now, we prove $\tri^*\subset f(\Bo^*).$ Indeed, let $y\in\tri^*$, at the step $n$ suppose that $y\in{}^*\tri_n^{j_1j_2...j_n}$. This implies $f_n^{-1}(y)\in{}^*\Bo_n^{j_1j_2...j_n}.$ The notation $"0"$ indicate the interior of a set. Because ${}^*\Bo_n\subset {}^0\Bo_n\subset\Bo_n$ then there exist a sequence $\{n_k\}$ such that $f^{-1}_{n_k}(y)\in{}^*\Bo_{n_k}^{j_1j_2...j_{n_k}}$ and $\Bo_{n_k}^{j_1j_2...j_{n_k}j_{n_{k+1}}}\subset {}^*\Bo_{n_k}^{j_1j_2...j_{n_k}}.$  Hence,
$$\bigcap_{k=1}^{\infty}{}^*\Bo_{n_k}^{j_1j_2...j_{n_k}}=\bigcap_{k=1}^{\infty}{}^0\Bo_{n_k}^{j_1j_2...j_{n_k}}=\bigcap_{k=1}^{\infty}\Bo_{n_k}^{j_1j_2...j_{n_k}}=\{x\}.$$ 
The last equality is followed by the sequence of closed squares has the lengths of the sides tend to $0$ so the intersection is only one point. 
Let us prove $f(x)=y.$ Because $x\in{}^*\Bo_k^{j_1j_2...j_k}$ then $f_{n_k}(x)\in{}^*\tri_k^{j_1j_2...j_k}.$ On the other hand, $y\in{}^*\tri_k^{j_1j_2...j_k}$, this implies 

$$
|f_{n_k}(x)-y|\leq|{}^*\tri_k^{j_1j_2...j_k}|=\frac{1}{4^{n_k}}\to 0\text{ when }k\to\infty. 
$$
Therefore, $f_{n_k}(x)\to y$, we also have $f_n(x)\to f(x)$ so $y=f(x).$

Step 2: The set $\tri\setminus f({}^*\Bo)\subset\tri\setminus {}^*\tri=C$ so it is denumerable or finite. We extend $f$ such that it maps bijectively from a subset $M'$ of $M$ to $\tri\setminus f({}^*\Bo).$  Set $B={}^*\Bo\cup M'$ then $\Lb(B)=1$ and $f$ is a bijection from $B$ to $\tri.$
It remains us to prove $f$ is measurable and measure-preserving. The family $\{\emptyset,{}^*\Bo_{n}^{j_1j_2...j_{n}},\P(M')$ is the generating collection of the $\si-$algebra Borel $\B(B)$ where $\P(M')$ is the family of all subsets of $M'$. The family $\{\emptyset,{}^*\tri_{n}^{j_1j_2...j_{n}},\P(C)\}$ is the generating collection of $\B(\tri).$ Remark that the two generating collections are closed by the intersection.
The image of ${}^*\Bo_{n}^{j_1j_2...j_{n}}$ by $f$ is ${}^*\tri_{n}^{j_1j_2...j_{n}}$ and maybe add a denumerable or finite number of points of $C.$ So $f({}^*\Bo_{n}^{j_1j_2...j_{n}})\in\B(\tri).$
$$
\Lb(f({}^*\Bo_{n}^{j_1j_2...j_{n}}))=\Lb({}^*\tri_{n}^{j_1j_2...j_{n}})=\frac{1}{4^n}=\Lb({}^*\Bo_{n}^{j_1j_2...j_{n}}).
$$
The image of ${}^*\tri_{n}^{j_1j_2...j_{n}}$ by $f^{-1}$ is ${}^*\Bo_{n}^{j_1j_2...j_{n}}$ and maybe subtract a denumerable or finite number of points of $f^{-1}(C)$ then $f^{-1}({}^*\tri_{n}^{j_1j_2...j_{n}})\in\B(B).$ Since $f$ satisfies Theorem \ref{MB} then $f,f^{-1}$ are bijection, measurable and measure-preserving i.e. $f$ is a probability isomorphism.  

\hfill \textbf{$\Bo$}

There are some consequences as follows:
\begin{cor}
There exist a bijection from the unit square to the unit segment such that it is a probability isomorphism between two Lebesgue spaces (Lebesgue $\si-$algebra). 
\end{cor}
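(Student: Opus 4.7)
The plan is to upgrade the Borel probability isomorphism $f:B\to\tri$ of Theorem \ref{MTH} to a genuine bijection $g:\Bo\to\tri$ that is bi-measurable and measure-preserving with respect to the Lebesgue completions of the Borel $\si$-algebras. The only gap between the two statements is that $f$ is defined on a full-measure Borel set $B\subsetneq\Bo$, so we must extend it across the Borel null set $N_1:=\Bo\setminus B$, which (being essentially the dyadic grid $M$) has continuum cardinality.

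First I would pick a Borel null set $N_2\subset\tri$ of continuum cardinality that is disjoint from the countable exceptional set $C$ used in the construction of $f$; a translated Cantor-type set is a convenient choice. Set $B':=B\setminus f^{-1}(N_2)$. Since both $N_1$ and $f^{-1}(N_2)$ are Borel null, $B'$ still has full Lebesgue measure, and the restriction $f|_{B'}:B'\to\tri\setminus N_2$ remains a Borel probability isomorphism. Both the leftover domain $\Bo\setminus B'=N_1\cup f^{-1}(N_2)$ and the leftover codomain $N_2$ are Borel null and of continuum cardinality, so any set-theoretic bijection $h$ between them exists. Define
\begin{equation*}
g(x):=\begin{cases} f(x), & x\in B', \\ h(x), & x\in\Bo\setminus B', \end{cases}
\end{equation*}
yielding a bijection $g:\Bo\to\tri$.

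To finish I would verify the Lebesgue-theoretic properties. Any Lebesgue set $L\subset\tri$ decomposes as $L=L_1\sqcup L_2$ with $L_1\subset\tri\setminus N_2$ and $L_2\subset N_2$. Writing $L_1$ as the symmetric difference of a Borel set with a subset of a Borel null set, and using that the Borel measure-preserving bijection $f|_{B'}$ sends Borel nulls to Borel nulls, one obtains that $g^{-1}(L_1)=f^{-1}(L_1)$ is Lebesgue measurable with $\Lb(g^{-1}(L_1))=\Lb(L_1)$; meanwhile $g^{-1}(L_2)=h^{-1}(L_2)$ is a subset of the Borel null set $\Bo\setminus B'$, hence Lebesgue null. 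Therefore $g^{-1}(L)$ is Lebesgue measurable of measure $\Lb(L)$, and the symmetric argument handles $g$ applied to Lebesgue subsets of $\Bo$. The only subtle point is the bookkeeping on null sets, but this is completely controlled by the completeness of the Lebesgue $\si$-algebra under modification on null sets; no new analytic input beyond Theorem \ref{MTH} is required.
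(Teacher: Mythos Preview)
Your proposal is correct and follows essentially the same route as the paper: choose a continuum-sized Borel null set $N_2\subset\tri$ (the paper calls it $Y$), remove its preimage $f^{-1}(N_2)$ from $B$, glue an arbitrary bijection from the leftover null set $(\Bo\setminus B)\cup f^{-1}(N_2)$ onto $N_2$ to the restriction $f|_{B\setminus f^{-1}(N_2)}$, and then verify Lebesgue measurability and measure preservation via the Borel-plus-null decomposition. The only cosmetic differences are that you insist $N_2$ be disjoint from $C$ (harmless but unnecessary) and that your null-set bookkeeping in the verification is slightly more explicit than the paper's.
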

\begin{proof}
We proved the existence of probability isomorphism from $\Bo$ to $\tri$ with the difference null set i.e. the bijection from $B$ to $\tri$. It is still open for the question of the existence a bijection from $\Bo$ to $\tri$, Borel measurable and is measure preserving. Now we will prove that there is also a probability isomorphism from $\Bo$ to $\tri$ which is Lebesgue measurable. Indeed, let a set $Y\subset\tri$ such that the cardinality of $Y$ is continumn and $\Lb(Y)=0$. Set $X:=f^{-1}(Y).$ Take a arbitrary bijection from $(\Bo\setminus B)\cup X$ to $Y$ (a bijection between two continumn sets), combine with the bijection
$f_{|B\setminus X}:B\setminus X\to\tri\setminus Y$ we get the bijection $F:\Bo\to\tri.$ For simplicity, we still denote $F$ by $f$.
We proved $f$ is Borel measurable and Borel measure-preserving . Now we prove for all Lebesgue sets.
Firstly, let us prove $\Lb(X)=0$. For all $\ep>0$, there exist an open set $G$ such that $Y\subset \G$ and $\Lb(G)<\ep.$ Since, $X\subset f^{-1}(G)$ and $\Lb(f^{-1}(G))=\Lb(G)<\ep.$ This implies $X$ is Lebesgue measurable and $\Lb(X)=0.$ For every Lebesgue measurable set $H$ of $\tri$ we can write $H=H_1\cup H_2$ where $H_1$ is Borel set, $\Lb(H_2)=0$ and they are disjoint. So $f^{-1}(H)=f^{-1}(H_1)\cup f^{-1}(H_2)$ is Lebesgue measurable, $\Lb(f^{-1}(H))=\Lb(f^{-1}(H_1))=\Lb(H_1)=\Lb(H).$
So $f$ is a probability isomorphism between two Lebesgue probability spaces $\Bo$ and $\tri.$
\end{proof}

\begin{cor}
There exists a probability isomorphism between two Lebesgue spaces $[0,1]^n$ and $[0,1]^m$, where $m,n$ are positive integers.
\end{cor}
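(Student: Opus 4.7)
The plan is to proceed by induction on $n$ to establish the special case $m=1$, and then obtain the general statement by composition. More precisely, I first aim to show that for every integer $n\ge 1$ there is a probability isomorphism $h_n:[0,1]^n\to[0,1]$. The case $n=1$ is taken as the identity and the case $n=2$ is the preceding corollary (the Lebesgue version of Theorem \ref{MTH}). For the inductive step, I would write $[0,1]^{n+1}=[0,1]^n\times[0,1]$ and define
\begin{align*}
h_{n+1}\;:=\;h_2\circ(h_n\times\mathrm{id}),
\end{align*}
where $h_2$ is the isomorphism furnished by the preceding corollary and $\mathrm{id}$ is the identity on $[0,1]$. Once $h_n$ and $h_m$ are both available, the composition $h_m^{-1}\circ h_n:[0,1]^n\to[0,1]^m$ is the probability isomorphism we are after.

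The central technical point needed for the induction is that the product of a probability isomorphism with the identity is again a probability isomorphism. I would verify this with Theorem \ref{MB}: the product Lebesgue $\si$-algebra is generated by measurable rectangles $A\ti B$, a class closed under intersection. On such a rectangle $(h_n\times\mathrm{id})(A\ti B)=h_n(A)\ti B$, which is again a measurable rectangle, and by Fubini together with the measure-preservation of $h_n$ one has
\begin{align*}
\Lb(A\ti B)\;=\;\Lb(A)\,\Lb(B)\;=\;\Lb(h_n(A))\,\Lb(B)\;=\;\Lb(h_n(A)\ti B).
\end{align*}
The analogous computation for the inverse bijection shows that both hypotheses of Theorem \ref{MB} hold, so $h_n\times\mathrm{id}$ is a probability isomorphism. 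Composing with $h_2$ (also a probability isomorphism) then gives $h_{n+1}$, closing the induction; composing $h_n$ with $h_m^{-1}$ finishes the proof.

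The main obstacle is bookkeeping rather than conceptual: if one tried to use Theorem \ref{MTH} directly, each $h_n$ would be defined only on a Borel subset of full measure, and forming Cartesian products and compositions of such partially defined bijections would require careful tracking of the exceptional null sets (one would have to remove a null set in the domain and its image in the target at every step, and verify that the successive null sets accumulate to a set of measure zero). Working in the Lebesgue category via the preceding corollary, where the isomorphism is genuinely defined on the whole space, sidesteps this issue entirely and reduces the remaining verification to the routine compatibility of Cartesian product and composition with the measure-preserving bijection property.
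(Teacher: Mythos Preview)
The paper states this corollary without proof, so your inductive reduction $h_{n+1}=h_2\circ(h_n\times\mathrm{id})$ followed by $h_m^{-1}\circ h_n$ is exactly the intended route and is sound in outline.

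There is, however, one genuine slip in the verification that $h_n\times\mathrm{id}$ is a probability isomorphism of \emph{Lebesgue} spaces. You assert that ``the product Lebesgue $\sigma$-algebra is generated by measurable rectangles $A\times B$''. This is false: the $\sigma$-algebra generated by Lebesgue rectangles is $\L([0,1]^n)\otimes\L([0,1])$, which is strictly smaller than the Lebesgue $\sigma$-algebra $\L([0,1]^{n+1})$ (the latter is the completion of the Borel $\sigma$-algebra and contains, for instance, every subset of every null set). Hence Theorem~\ref{MB} applied with rectangles as generators only yields bi-measurability and measure preservation for the product $\sigma$-algebras, not yet for the full Lebesgue $\sigma$-algebras on either side.

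The repair is short and is precisely the completion argument already used in the preceding corollary. Your rectangle computation together with the $\pi$--$\lambda$ theorem shows that $h_n\times\mathrm{id}$ is a measure-preserving bijection between the Borel (equivalently, product) $\sigma$-algebras. Now any $H\in\L([0,1]^2)$ decomposes as $H=H_1\cup H_2$ with $H_1$ Borel and $\Lb(H_2)=0$. The preimage $(h_n\times\mathrm{id})^{-1}(H_1)$ lies in $h_n^{-1}(\B[0,1])\otimes\B[0,1]\subset\L([0,1]^n)\otimes\B[0,1]\subset\L([0,1]^{n+1})$ and has measure $\Lb(H_1)$, while $(h_n\times\mathrm{id})^{-1}(H_2)$ sits inside the preimage of a Borel null set and is therefore Lebesgue-null. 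The same reasoning applies to the inverse map. Inserting this one paragraph closes the gap, and the rest of your argument (composition with $h_2$, then with $h_m^{-1}$) goes through unchanged.
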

Now, we give an application of Theorem \ref{MTH}. We recover the existence of random variables on the probability space $\tri$ associate the given distribution functions.
\begin{cor}
Let $F_1,F_2,...,F_n$ be the distribution functions and $(\tri,\B(\tri),\Lb)$ be the Borel probability space. Then there exits $n$ independent random variables $X_1,...,X_n:\tri\to\Rb$ such that their distribution functions are $F_{X_1}=F_1,...,F_{X_n}=F_n.$
\end{cor}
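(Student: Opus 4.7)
The plan is to transport the standard construction of independent random variables from the product space $\tri^n$ back to $\tri$ using the probability isomorphism supplied by the preceding corollary. First I would build the random variables on $\tri^n=[0,1]^n$ in the obvious way: let $\pi_i:\tri^n\to\tri$ be the $i$-th coordinate projection, so that under the product Lebesgue measure on $\tri^n$ the variables $\pi_1,\dots,\pi_n$ are independent and each uniform on $[0,1]$. Then define on $\tri^n$ the variables $Y_i:=F_i^{\leftarrow}\circ\pi_i$, where $F_i^{\leftarrow}(u):=\inf\{x\in\Rb:F_i(x)\ge u\}$ is the generalized inverse. Standard facts (quantile transform) give that each $Y_i$ has distribution function $F_i$, and since the $Y_i$ are measurable functions of the independent coordinates $\pi_i$, the family $Y_1,\dots,Y_n$ is independent on the probability space $(\tri^n,\B(\tri^n),\Lb)$.

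Next I would invoke the earlier corollary which yields a probability isomorphism $g:\tri\to\tri^n$ between the Borel probability spaces $(\tri,\B(\tri),\Lb)$ and $(\tri^n,\B(\tri^n),\Lb)$ (up to a null set, which is harmless for the purpose of defining random variables). Then I would set
\[
X_i:=Y_i\circ g,\qquad i=1,\dots,n,
\]
so that each $X_i:\tri\to\Rb$ is Borel measurable as a composition of Borel measurable maps.

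The verification has two pieces. For the marginal distributions, for every Borel set $A\subset\Rb$,
\[
\Lb(X_i^{-1}(A))=\Lb(g^{-1}(Y_i^{-1}(A)))=\Lb(Y_i^{-1}(A))=F_i(A),
\]
using that $g$ is measure-preserving and that $Y_i$ already has distribution $F_i$. For independence, for any Borel sets $A_1,\dots,A_n\subset\Rb$,
\[
\Lb\!\left(\bigcap_{i=1}^n X_i^{-1}(A_i)\right)=\Lb\!\left(g^{-1}\!\left(\bigcap_{i=1}^n Y_i^{-1}(A_i)\right)\right)=\Lb\!\left(\bigcap_{i=1}^n Y_i^{-1}(A_i)\right)=\prod_{i=1}^n F_i(A_i),
\]
again by the measure-preserving property of $g$ and the already-established independence of the $Y_i$ on $\tri^n$.

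The only mild subtlety, which I would handle up front, is the null set on which the probability isomorphism $g$ is not defined: one simply extends $g$ arbitrarily (e.g.\ to a constant) on that null set, or restricts attention to the full-measure set $B\subset\tri$ on which $g$ is a genuine Borel bijection onto a full-measure subset of $\tri^n$; neither modification affects distributions, so the identities above hold. I expect no substantive obstacle beyond this bookkeeping, since all the heavy lifting has been done in Theorem \ref{MTH} and its first corollary.
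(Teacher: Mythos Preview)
Your proposal is correct and follows essentially the same route as the paper: build the independent variables on $\tri^n$ via the quantile transform of the coordinate projections, then pull them back to $\tri$ through the probability isomorphism and use its measure-preserving property to check both the marginals and independence. The only cosmetic differences are notation (the paper writes the quantile as $\sup\{t:F_i(t)<x_i\}$ and calls the isomorphism $f$) and that you spell out the independence computation and the null-set bookkeeping a bit more explicitly than the paper does.
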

\begin{proof}
This result is proved by using Ramdemacher functions,( see \cite{No},\cite{bil08}). 
Let us explain how do we prove it. Let a probability isomorphism $f:\tri\to B^n.$
Let $\bar{X}_i: B^n\to\Rb$ define by $\bar{X}_i(x_1,..,x_i,...,x_n)=\sup\{t\in\Rb:F_i(t)<x_i\}$. It is clear that $\{\bar{X}_i\}_{i=\bar{1,n}}$ are independents and one proved that they have respectively the distribution functions $F_1,..,F_n.$ Set $X_i=\bar{X}_i\circ f$, then $\{X_i\}$ are independents and their distribution functions:
$$
F_{X_i}(x)=\Lb(X_i^{-1}(-\infty,x])=\Lb(f^{-1}\circ\bar{X}_i^{-1}(-\infty,x])=\Lb(\bar{X}_i^{-1}(-\infty,x])=F_i(x).
$$
\section{Open question}
It is still open for the existence of a probability isomorphism between two Borel probability spaces $([0,1],\B([0,1]),\Lb)$ and $([0,1]^2,\B([0,1]^2),\Lb)$ such that it has not the difference of a null measure set.
\end{proof}
\section*{Acknowledgments} I would like to thank my old professors of Faculty of mathematics, Hanoi university of education for suggesting this problem.

\newpage
\bibliographystyle{plain}
\bibliography{dl}
\thispagestyle{empty}
\end{document}